\documentclass{amsart}
\allowdisplaybreaks
\usepackage[parfill]{parskip}
\usepackage[noadjust]{cite}
\usepackage{epigraph}
\usepackage[colorlinks=true,
            linkcolor=red,
            urlcolor=blue,
            citecolor=magenta]{hyperref}
\usepackage{color}
\usepackage{mathrsfs}
%%%%%%%%%%%%%%THEOREM-LIKE ENVIRONMENTS%%%%%%%%%%%%%%%%%%%%%%
\theoremstyle{plain}
\newtheorem{thm}{Theorem}[section]
\newtheorem*{thm*}{Theorem}
\newtheorem{prop}{Proposition}[section]
\newtheorem*{prop*}{Proposition}
\newtheorem{cor}{Corollary}[section]
\newtheorem*{cor*}{Corollary}

\newtheorem*{lem*}{Lemma}
% % % % % % % % % % % % % % % % % % % % % % % % % % % % %
\theoremstyle{definition}

\newtheorem*{defn*}{Definition}
\newtheorem{exmp}{Example}[section]
\newtheorem*{exmp*}{Example}

\newtheorem*{exmps*}{Examples}

\newtheorem{rem}{Remark}[section]
\newtheorem*{rem*}{Remark}

\newtheorem*{rems*}{Remarks}

\newtheorem*{note*}{Note}
%%%%%%%%%%%%%%%%%%CUSTOMIZATIONS%%%%%%%%%%%%%%%%%%%%%%%%%%%%%%%%%%%%%%%%%%%
\newcommand{\N}{{\mathbb N}}%%%%%%%%%NATURAL NUMBERS
%%%%%%%%%INTEGERS
%%%%%%%%%REAL NUMBERS
\newcommand{\C}{{\mathbb C}}%%%%%%%%%COMPLEX NUMBERS

\newcommand\restr[2]{\ensuremath{#1\big|{#2}}}
%\renewcommand{\qedsymbol}{Q.E.D.}
%%%%%%%%%EQUATION NUMBERING WITHIN SECTIONS

%%%%%%%%%%%%%%%%%%%%%%%%%%%%%%%%%%%%%%%%%%%%%%%%%%%%%%%%%%%%%
\begin{document}
%%%%%%%%%%%%%%%%%%%%%%TITLE%%%%%%%%%%%%%%%%%%%%%%%%%%%%%%%%%%
\title[On spectral features inherent to scalar type spectral operators]
{On certain spectral features\\ inherent to scalar type spectral operators}
%%%%%%%%%%%%%%%%%%%%%%AUTHOR%%%%%%%%%%%%%%%%%%%%%%%%%%%%%%%%%
\author[Marat V. Markin]{Marat V. Markin}
%%%%%%%%%%%%%%%%%%%%%ADDRESS%%%%%%%%%%%%%%%%%%%%%%%%%%%%%%%%%
\address{
Department of Mathematics\newline
%College of Science and Mathematics\newline
California State University, Fresno\newline
5245 N. Backer Avenue, M/S PB 108\newline
Fresno, CA 93740-8001
}
%%%%%%%%%%%%%%%%%%%%%E-MAIL%%%%%%%%%%%%%%%%%%%%%%%%%%%%%%%%%%
\email{mmarkin@csufresno.edu}
%%%%%%%%%%%%%%%%%%%%%DEDICATORY%%%%%%%%%%%%%%%%%%%%%%%%%%%%%%
%\dedicatory{To Dr. V.I. Gorbachuk in honor of her 80th anniversary.}
%%%%%%%%%%%%%%%%%%%%DATE%%%%%%%%%%%%%%%%%%%%%%%%%%%%%%%%%%%%%
%\date{}
%%%%%%%%%%%%%%%%%%%%%ACKNOWLEDGEMENTS%%%%%%%%%%%%%%%%%%%%%%%%
%\thanks{}
%%%%%%%%%%%%%%%%%%%SUBJECT CLASSIFICATION%%%%%%%%%%%%%%%%%%
\subjclass{Primary 47B40; Secondary 47B15}
%%%%%%%%%%%%%%%%%%%%KEYWORDS%%%%%%%%%%%%%%%%%%%%%%%%%%%%%%%%%
\keywords{Spectral gap, scalar type spectral operator}
%%%%%%%%%%%%%%%%%%%%ABSTRACT%%%%%%%%%%%%%%%%%%%%%%%%%%%%%%%%%
\begin{abstract}
Important spectral features, such as the emptiness of the residual spectrum, countability of the point spectrum, provided the space is separable, and a characterization of spectral gap at $0$, known to hold for bounded scalar type spectral operators, are shown to naturally transfer to the unbounded case. 
\end{abstract}
%%%%%%%%%%%%%%%%%%%%%%%%%%%%%%%%%%%%%%%%%%%%%%%%%%%%%%%%%%%%%
\maketitle
%%%%%%%%%%%%%%%%%%%%%%%%%EPIGRATH%%%%%%%%%%%%%%%%%%%%%%%%%%%%
\epigraph{\textit{Curiosity is the lust of the mind.}}{Thomas Hobbes}

% % % % % % % % % % % % % % % % % % % % % % % % 
\section[Introduction]{Introduction}

As is known {\cite[Theorem 8]{Dunford1954}} (see also \cite{Dunford1958,Dun-SchIII}), a \textit{bounded} linear operator $T$ on a complex Banach space $(X,\|\cdot\|)$ is \textit{spectral} iff it allows the unique \textit{canonical decomposition}
\begin{equation*}%\label{CD}
T=S+N,
\end{equation*}
where $S$ is a \textit{scalar type spectral operator} and $N$ is a \textit{quasinilpotent operator} commuting with $S$. The operators $T$ and $S$ have the same \textit{spectrum} and \textit{spectral measure} $E(\cdot)$, with
\begin{equation}\label{SP}
S=\int\limits_{\sigma(T)} \lambda\,dE(\lambda),
\end{equation}
where $\sigma(\cdot)$ is the \textit{spectrum} of an operator, and are called the \textit{scalar} and \textit{radical parts} of $T$, respectively. 

The operator $N$ being \textit{nilpotent}, $T$ is called of \textit{finite type} (cf. \cite{Dunford1954,Foguel1958}), in which case, in particular, for a bounded scalar type spectral operator ($T=S$, $N=0$), the \textit{residual spectrum} is \textit{empty} {\cite[Theorem 4.1]{Foguel1958}} and, provided the space $X$ is \textit{separable}, the \textit{point spectrum} is \textit{countable} {\cite[Theorem 4.4]{Foguel1958}}.

Furthermore, {\cite[Theorem 3.4]{Foguel1958}} describing the closedness of the range of a bounded spectral operator $T$ on a complex Banach space $X$, when applied to a bounded scalar type spectral operator ($T=S$, $N=0$), turns into a characterization of \textit{spectral gap} at $0$ acquiring the following form:

\begin{thm}[{\cite[Theorem 3.4]{Foguel1958}}, the scalar type case]\label{SGB}\ \\
For a bounded scalar type spectral operator $A$ on a complex Banach space $(X,\|\cdot\|)$ with $0\in \sigma(A)$, $0$ is an isolated point of the spectrum $\sigma(A)$ iff the range of $A$ is closed.
\end{thm}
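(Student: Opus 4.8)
The plan is to work throughout with the spectral measure $E(\cdot)$ of the scalar type spectral operator $A$, for which $\sigma(A)=\operatorname{supp}E$, and to analyze how $A=\int_{\sigma(A)}\lambda\,dE(\lambda)$ acts on the spectral subspaces $E(\delta)X$. Two standard facts about spectral operators will be invoked repeatedly: the spectral measure is uniformly bounded, $\sup_{\delta}\|E(\delta)\|=M<\infty$, whence $\|\int f\,dE\|\le K\|f\|_\infty$ for a fixed constant $K$ and every bounded Borel function $f$; and, for a Borel set $\delta$ whose closure is bounded away from $0$, the restriction $A|_{E(\delta)X}$ has spectrum contained in $\overline{\delta}$ and is therefore boundedly invertible.

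For the implication ``$0$ isolated $\Rightarrow\operatorname{ran}A$ closed'' I would split $X=E(\{0\})X\oplus(I-E(\{0\}))X$. On the first summand $AE(\{0\})=\int_{\{0\}}\lambda\,dE(\lambda)=0$, while on the second $A$ has spectrum $\sigma(A)\setminus\{0\}$, which, $0$ being isolated, is bounded away from $0$; hence $A$ restricts to an invertible operator on $(I-E(\{0\}))X$. Consequently $\operatorname{ran}A=(I-E(\{0\}))X$, the range of a bounded projection, and is therefore closed.

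The substantive implication is ``$\operatorname{ran}A$ closed $\Rightarrow 0$ isolated,'' which I would establish in contrapositive form via the standard criterion that $\operatorname{ran}A$ is closed iff $A$ is bounded below modulo its kernel, i.e. iff $\inf\{\|Ax\|:\dist(x,\ker A)=1\}>0$. Assuming $0$ is \emph{not} isolated in $\sigma(A)$, choose $\lambda_n\in\sigma(A)\setminus\{0\}$ with $\lambda_n\to0$ and enclose each in an open disk $\delta_n$ avoiding $0$ with $\sup_{\lambda\in\delta_n}|\lambda|\to0$. Since $\lambda_n\in\operatorname{supp}E$, each $E(\delta_n)\neq0$, so I may pick unit vectors $x_n\in E(\delta_n)X$; then $\|Ax_n\|=\|\int_{\delta_n}\lambda\,dE(\lambda)\,x_n\|\le K\sup_{\lambda\in\delta_n}|\lambda|\to0$.

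It remains to bound $\dist(x_n,\ker A)$ away from $0$, and this is the step I expect to be the crux. I would first identify $\ker A=E(\{0\})X$: if $Ay=0$ then, putting $\gamma_k=\{\lambda\in\sigma(A):|\lambda|\ge 1/k\}$, invertibility of $A|_{E(\gamma_k)X}$ forces $E(\gamma_k)y=0$, and letting $k\to\infty$ (so $E(\gamma_k)\to I-E(\{0\})$ strongly by countable additivity) yields $(I-E(\{0\}))y=0$. Because $0\notin\delta_n$ we have $E(\{0\})x_n=0$, hence $(I-E(\{0\}))x_n=x_n$, so for every $y\in\ker A=E(\{0\})X$, $\|x_n-y\|\ge M^{-1}\|(I-E(\{0\}))(x_n-y)\|=M^{-1}$. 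Thus $\dist(x_n,\ker A)\ge M^{-1}$ while $\|Ax_n\|\to0$, contradicting the closed-range criterion. The delicate points I anticipate are justifying the uniform functional-calculus bound $K$ and the identification $\ker A=E(\{0\})X$; the rest is bookkeeping with the spectral projections.
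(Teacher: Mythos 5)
Your proof is correct, but it takes a genuinely different route from the paper's. Note first that the paper does not reprove the bounded statement at all: it quotes it from Foguel and instead proves the unbounded generalization (Theorem \ref{CSG}), whose proof specializes to the bounded case, so that is the natural comparison. In the ``only if'' direction the two arguments are close cousins: the paper applies the operational calculus to $F(\lambda)=\chi_{\{|\lambda|\ge\gamma\}}(\lambda)/\lambda$ to exhibit $E_A(\sigma(A)\setminus\{0\})f=AF(A)f\in R(A)$ and then invokes Theorem \ref{M2006} to identify $E_A(\sigma(A)\setminus\{0\})X=\overline{R(A)}$; you instead invert the restriction of $A$ to $(I-E_A(\{0\}))X$ directly --- your inverse is precisely the paper's $F(A)$ in disguise --- and compute $R(A)=(I-E_A(\{0\}))X$ outright, bypassing the decomposition theorem (one cosmetic slip: the restriction's spectrum is a priori only \emph{contained in} $\overline{\sigma(A)\setminus\{0\}}$, but isolation of $0$ makes that set closed and bounded away from $0$, which is all you use). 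The genuinely different part is the ``if'' direction. The paper argues softly: $\sigma_r(A)=\emptyset$ forces $0\in\sigma_p(A)$, the decomposition $X=\ker A\oplus R(A)$ puts $A$ in block-diagonal form with an injective surjective part $A_1$, the Closed Graph Theorem makes $A_1^{-1}$ bounded, and openness of the resolvent set isolates $0$. You argue quantitatively in contrapositive form, manufacturing unit vectors $x_n\in E_A(\delta_n)X$ over shrinking disks near the non-isolated spectral point $0$ with $\|Ax_n\|\le K\sup_{\lambda\in\delta_n}|\lambda|\to 0$ yet $\dist(x_n,\ker A)\ge M^{-1}$, which annihilates the reduced minimum modulus and hence the closedness of $R(A)$. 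Your two flagged ``delicate points'' are indeed the load-bearing ones and both are sound: the uniform bound $\|F(A)\|\le 4M\sup|F|$ for bounded Borel $F$ is standard (Dunford--Schwartz, Part III), and your identification $\ker A=E_A(\{0\})X$ via the exhaustion $\gamma_k=\{|\lambda|\ge 1/k\}$ and strong countable additivity is a correct self-contained proof of a piece of Theorem \ref{M2006}; your estimate $\|x_n-y\|\ge M^{-1}$ is legitimate because $I-E_A(\{0\})=E_A(\C\setminus\{0\})$ is itself a value of the spectral measure, hence of norm at most $M$. As for what each approach buys: the paper's soft argument transfers verbatim to unbounded $A$ and yields the decomposition \eqref{DSS} as a structural byproduct, while yours avoids Corollary \ref{ERS} and the Closed Graph Theorem entirely, is self-contained modulo the Dunford--Schwartz calculus bound, and delivers extra quantitative information --- a non-isolated $0$ produces approximate-kernel vectors uniformly far from $\ker A$; it would also extend to the unbounded case with little change, since $x_n\in E_A(\delta_n)X\subseteq D(A)$ for bounded $\delta_n$.
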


The case of an \textit{unbounded} spectral operator $T$ in a complex Banach space $(X,\|\cdot\|)$ appears to be essentially more formidable. Thus, $E(\cdot)$ being the spectral measure of $T$, the scalar part $S$ of $T$ defined by \eqref{SP} is an unbounded scalar type spectral operator and the radical part $N:=T-S$ need not be bounded, let alone quasinilpotent \cite{Bade1954,Dun-SchIII}.

A natural question is whether the discussed spectral features pass to unbounded spectral operators at least when they are of scalar type, which would include the important class \textit{normal operators}. In this note, we are to show that the emptiness of the residual spectrum, the countability of the point spectrum, provided the space is separable, as well as the characterization of spectral gap at $0$ are inherent to scalar type spectral operators, bounded or not.

%%%%%%%%%%%%%%%%%%%%%%%%%%%%%%%%%%%%%%%%%%%%%%%%
\section{Preliminaries}\label{prelim}

Recall that, the spectrum $\sigma(A)$ of a \textit{closed linear operator} $A$ in a complex Banach space $(X,\|\cdot\|)$ is partitioned into disjoint components, $\sigma_p(A)$, $\sigma_c(A)$, and $\sigma_r(A)$, called the \textit{point}, \textit{continuous}, and \textit{residual spectrum} of $A$, respectively, as follows:
\begin{equation*}
\begin{split}
& \sigma_p(A)=\left\{\lambda\in \C \,\middle|\,A-\lambda I\ \text{is \textit{not one-to-one}, i.e., $\lambda$ is an \textit{eigenvalue} of $A$} \right\},\\
& \sigma_c(A)=\left\{\lambda\in \C \,\middle|\,A-\lambda I\ \text{is \textit{one-to-one} and $R(A-\lambda I)\neq X$, but $\overline{R(A-\lambda I)}=X$} \right\},\\
& \sigma_r(A)=\left\{\lambda\in \C \,\middle|\,A-\lambda I\ \text{is \textit{one-to-one} and $\overline{R(A-\lambda I)}\neq X$} \right\},
\end{split}
\end{equation*}
where $I$ stands for the \textit{identity} operator on $X$, $R(\cdot)$ is the \textit{range} of an operator, and $\bar{\cdot}$ is the \textit{closure} of a set in $\C$ (see, e.g., \cite{Foguel1958}).

The properties of spectral operators, spectral measures, and the Borel operational calculus underlying the subsequent discourse are exhaustively delineated in \cite{Dunford1958,Dun-SchIII}. Here, for the reader's convenience, we give an outline of some particularly important facts.

Recall that a \textit{spectral operator} is a densely defined closed linear operator $A$ in a complex Banach space $(X,\|\cdot\|)$ with an associated {\it spectral measure} ({\it resolution of the identity}) $E_A(\cdot)$, i.e., a \textit{strongly $\sigma$-additive} operator function, which assigns to each set $\delta$ from the $\sigma$-algebra $\mathscr{B}$ of {\it Borel sets}
in $\C$ a {\it projection operator} $E_A(\delta)=E_A^2(\delta)$ on $X$ and has the following properties:
\begin{equation*}
E_A(\emptyset)=0,\ E_A(\C)=I,\ E_A(\delta\cap\sigma)=E_A(\delta)E_A(\sigma)=E_A(\sigma)E_A(\delta),\ \delta,\sigma\in \mathscr{B},
\end{equation*}
where $0$ stands for the \textit{zero} operator on $X$, and
\begin{equation*}
\begin{split}
&E_A(\delta)X\subseteq D(A),\ \text{for each \textit{bounded}}\ \delta\in \mathscr{B},\\
&E_A(\delta)D(A)\subseteq D(A),\ 
AE_A(\delta)f=E_A(\delta)Af,\ \delta\in \mathscr{B},f\in D(A),\\
&\sigma(\restr{A}{E_A(\delta)X})\subseteq \bar{\delta},\ \delta\in \mathscr{B},
\end{split}
\end{equation*}
where $D(\cdot)$ is the \textit{domain} of an operator and $\restr{\cdot}{\cdot}$ is the \textit{restriction} of an operator (left) to a subspace (right).

Due to its {\it strong countable additivity}, the spectral measure $E_A(\cdot)$ is {\it bounded} \cite{Dun-SchI,Dun-SchIII}, i.e.,
\begin{equation*}%\label{bounded}
\exists\, M>0\ \forall\,\delta\in \mathscr{B}:\ \|E_A(\delta)\|\le M.
\end{equation*}
The notation $\|\cdot\|$ has been recycled here to designate the norm in the space $\mathscr{L}(X)$ of all bounded linear operators on $X$, such an economy of symbols being rather conventional. 

A spectral operator $A$ in a complex Banach space $(X,\|\cdot\|)$ with spectral measure $E_A(\cdot)$ is said to be of \textit{scalar type} if
\begin{equation*}
A=\int\limits_{\C} \lambda\,dE_A(\lambda),
\end{equation*}
which is imbedded into the structure of the \textit{Borel operational calculus}
associated with such operators  \cite{Dunford1958,Dun-SchIII} and assigning to any Borel measurable function $F:\C\to \C$ a scalar type spectral operator
\begin{equation*}
F(A):=\int\limits_\C F(\lambda)\,dE_A(\lambda)
\end{equation*}
defined as follows:
\begin{equation*}
\begin{split}
F(A)f&:=\lim_{n\to\infty}F_n(A)f,\ f\in D(F(A)),\\
D(F(A))&:=\left\{f\in X\middle| \lim_{n\to\infty}F_n(A)f\ \text{exists}\right\},
\end{split}
\end{equation*}
where
\begin{equation*}
F_n(\cdot):=F(\cdot)\chi_{\{\lambda\in\C\,|\,|F(\lambda)|\le n\}}(\cdot),
\ n\in\N,
\end{equation*}
($\chi_\delta(\cdot)$ is the {\it characteristic function} of a set $\delta\subseteq \C$, $\N:=\left\{1,2,3,\dots\right\}$ is the set of \textit{natural numbers}) and
\begin{equation*}
F_n(A):=\int\limits_{\C} F_n(\lambda)\,dE_A(\lambda),\ n\in\N,
\end{equation*}
are {\it bounded} scalar type spectral operators on $X$ defined in the same manner as for a {\it normal operator} (see, e.g., \cite{Dun-SchII,Plesner}).

The spectrum $\sigma(A)$ of a scalar type spectral operator $A$ being the {\it support} of its spectral measure $E_A(\cdot)$, $\C$ can be replaced with $\sigma(A)$ in the above definitions whenever appropriate \cite{Dunford1958,Dun-SchIII}.

In a complex Hilbert space, the scalar type spectral operators are precisely those similar to the {\it normal} ones \cite{Wermer1954}.

%%%%%%%%%%%%%%%%%%%%%%%%%%%%%%%%%%%%%%%%%%%%%%%%
\section{Spectral Features Inherent to Scalar Type Spectral Operators}

In \cite{Markin2006}, the following generalization of the well-known orthogonal decomposition for a normal operator in a complex Hilbert space (see, e.g., \cite{Dun-SchII,Plesner}) is found:

\begin{thm}[{\cite[Theorem]{Markin2006}}]\label{M2006}\ \\
For a scalar type spectral operator $A$ in a complex Banach space $(X,\|\cdot\|)$ with spectral measure $E_A(\cdot)$, the \textit{direct sum decomposition}
\begin{equation}\label{DS}
X=\ker A\oplus \overline{R(A)}
\end{equation}
($\ker \cdot$ is the {\it kernel} of an operator) holds with 
\[
\ker A=E_A(\{0\})X\quad \text{and}\quad \overline{R(A)}=E_A(\sigma(A)\setminus\{0\})X.
\]
\end{thm}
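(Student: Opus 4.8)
The plan is to reduce the entire statement to the behaviour of the two complementary projections $P:=E_A(\{0\})$ and $Q:=E_A(\sigma(A)\setminus\{0\})$. Since $\{0\}$ and $\sigma(A)\setminus\{0\}$ are disjoint Borel sets whose union is $\sigma(A)$, the support of $E_A(\cdot)$, finite additivity gives $P+Q=E_A(\sigma(A))=I$, while $PQ=QP=E_A(\emptyset)=0$. Thus $P$ and $Q$ are bounded complementary projections, and their ranges $PX$ and $QX$ are closed subspaces furnishing the direct sum $X=PX\oplus QX$. It therefore suffices to establish the two identifications $\ker A=PX$ and $\overline{R(A)}=QX$; the decomposition \eqref{DS} then follows at once.

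The workhorse of both identifications is the observation that, for a \emph{bounded} Borel set $\delta$ with $0\notin\bar\delta$, the restriction $\restr{A}{E_A(\delta)X}$ is a \emph{bounded}, boundedly \emph{invertible} operator on the closed subspace $E_A(\delta)X$. Indeed, boundedness of $\delta$ yields $E_A(\delta)X\subseteq D(A)$ and, via the operational calculus, $AE_A(\delta)=\int_\C\lambda\chi_\delta(\lambda)\,dE_A(\lambda)$ is bounded (the integrand being bounded), whereas the spectral containment $\sigma(\restr{A}{E_A(\delta)X})\subseteq\bar\delta\not\ni 0$ forces invertibility. Concretely I would work with the bounded annuli $\delta_n:=\{\lambda\in\C\mid 1/n\le|\lambda|\le n\}$, $n\in\N$, which increase to $\C\setminus\{0\}$, so that, by the strong $\sigma$-additivity of $E_A(\cdot)$, $E_A(\delta_n)f\to E_A(\C\setminus\{0\})f=Qf$ for every $f\in X$.

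For the kernel, the inclusion $PX\subseteq\ker A$ is immediate from $AP=\int_\C\lambda\chi_{\{0\}}(\lambda)\,dE_A(\lambda)=0$, the integrand vanishing identically. For the reverse, given $f\in D(A)$ with $Af=0$, the commutation property gives $A\,E_A(\delta_n)f=E_A(\delta_n)Af=0$; since $E_A(\delta_n)f\in E_A(\delta_n)X$ and $\restr{A}{E_A(\delta_n)X}$ is injective, $E_A(\delta_n)f=0$ for every $n$, and letting $n\to\infty$ yields $Qf=0$, i.e.\ $f=Pf\in PX$. Hence $\ker A=PX=E_A(\{0\})X$.

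For the range, any $Af\in R(A)$ satisfies $P\,Af=A\,Pf=0$, so $R(A)\subseteq QX$, and closedness of $QX$ gives $\overline{R(A)}\subseteq QX$. Conversely, for $g\in QX$ one has $E_A(\delta_n)g\to Qg=g$, and each $E_A(\delta_n)g$ lies in $R(A)$: setting $h_n:=(\restr{A}{E_A(\delta_n)X})^{-1}E_A(\delta_n)g\in E_A(\delta_n)X\subseteq D(A)$ gives $Ah_n=E_A(\delta_n)g$. Thus $g\in\overline{R(A)}$, whence $\overline{R(A)}=QX=E_A(\sigma(A)\setminus\{0\})X$, completing the argument. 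I expect the main obstacle to be exactly this pair of reverse inclusions, where the bounded invertibility of the annular restrictions must be married to the strong continuity of $E_A(\cdot)$; the need to keep the preimages $h_n$ inside $D(A)$ is precisely what dictates the use of the \emph{bounded} annuli $\delta_n$ rather than the unbounded exteriors $\{\lambda\mid|\lambda|\ge 1/n\}$.
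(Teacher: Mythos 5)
Your proof is correct, but a preliminary remark on the comparison is in order: this paper does not prove Theorem \ref{M2006} at all --- it imports it from \cite{Markin2006} --- so the only in-paper benchmark for your technique is the ``only if'' part of the proof of Theorem \ref{CSG}, which runs on the same engine as the cited source. In substance you follow that engine: reduce everything to the complementary projections $P=E_A(\{0\})$, $Q=E_A(\sigma(A)\setminus\{0\})$, invert $\lambda$ on Borel sets bounded away from $0$, and let strong $\sigma$-additivity supply the limit passages. Where you genuinely differ is the inversion mechanism: you restrict $A$ to $E_A(\delta_n)X$ for the \emph{bounded} annuli $\delta_n=\left\{\lambda\,\middle|\,1/n\le|\lambda|\le n\right\}$ and invoke $\sigma(\restr{A}{E_A(\delta_n)X})\subseteq\delta_n\not\ni 0$, whereas the paper's habit is to feed the \emph{bounded} Borel function $F_n(\lambda):=\chi_{\left\{\lambda\in\C\,\middle|\,|\lambda|\ge 1/n\right\}}(\lambda)/\lambda$ to the operational calculus: then $F_n(A)$ is bounded, $AF_n(A)=E_A\left(\left\{\lambda\,\middle|\,|\lambda|\ge 1/n\right\}\right)$ holds on all of $X$ (since $\lambda F_n(\lambda)$ is bounded, $F_n(A)X\subseteq D(A)$), and $F_n(A)Af=E_A\left(\left\{\lambda\,\middle|\,|\lambda|\ge 1/n\right\}\right)f$ for $f\in D(A)$; these two identities deliver your reverse inclusions $QX\subseteq\overline{R(A)}$ and $\ker A\subseteq PX$ in one line each. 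This also corrects your closing diagnosis: the bounded annuli are forced only by \emph{your} mechanism (a bounded, boundedly invertible restriction), not by the problem --- with the operational calculus the unbounded exteriors $\left\{\lambda\,\middle|\,|\lambda|\ge 1/n\right\}$ serve perfectly well, because what keeps the preimages inside $D(A)$ is the boundedness of $\lambda F_n(\lambda)$, not of the set. Two harmless nits: your identity $P+Q=E_A(\sigma(A))=I$ tacitly assumes $0\in\sigma(A)$ (if $0\notin\sigma(A)$, the support property gives $P=0$ and the theorem is trivially true), and in ``$PAf=APf=0$'' the commutation $E_A(\{0\})Af=AE_A(\{0\})f$ is only available for $f\in D(A)$, which is exactly where you use it, so no repair is needed.
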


Decomposition \eqref{DS} has the following immediate implication generalizing the well-known fact for \textit{normal operators} (see, e.g., \cite{Dun-SchII,Plesner}).  

\begin{cor}[Emptiness of Residual Spectrum]\label{ERS}\ \\
For a scalar type spectral operator $A$ in a complex Banach space $(X,\|\cdot\|)$,  $\sigma_r(A)=\emptyset$.
\end{cor}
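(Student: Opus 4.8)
The plan is to apply Theorem~\ref{M2006} not to $A$ itself but to the shifted operator $A-\lambda I$ for an arbitrary $\lambda\in\C$, thereby reducing the emptiness of the residual spectrum to the decomposition statement already established. The key observation is that $A-\lambda I$ is again a scalar type spectral operator: its spectral measure is $E_A(\cdot+\lambda)$ (equivalently, $A-\lambda I=\int_\C(\mu-\lambda)\,dE_A(\mu)$ arises from the Borel operational calculus applied to the function $F(\mu)=\mu-\lambda$), so Theorem~\ref{M2006} applies verbatim with $A$ replaced by $A-\lambda I$.

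First I would fix an arbitrary $\lambda\in\C$ and invoke Theorem~\ref{M2006} for the scalar type spectral operator $A-\lambda I$, obtaining the direct sum decomposition
\begin{equation*}
X=\ker(A-\lambda I)\oplus \overline{R(A-\lambda I)}.
\end{equation*}
Next I would argue by cases on whether $\lambda\in\sigma_r(A)$ could hold. Suppose, for contradiction, that $\lambda\in\sigma_r(A)$. By the definition of the residual spectrum recalled in Section~\ref{prelim}, this means $A-\lambda I$ is one-to-one, i.e.\ $\ker(A-\lambda I)=\{0\}$, and $\overline{R(A-\lambda I)}\neq X$. But substituting $\ker(A-\lambda I)=\{0\}$ into the decomposition above forces $X=\{0\}\oplus\overline{R(A-\lambda I)}=\overline{R(A-\lambda I)}$, directly contradicting $\overline{R(A-\lambda I)}\neq X$. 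Hence no $\lambda$ can belong to $\sigma_r(A)$, and $\sigma_r(A)=\emptyset$.

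The only genuine point requiring care is the claim that $A-\lambda I$ inherits scalar type spectrality from $A$, so that Theorem~\ref{M2006} is applicable to it; I expect this to be the sole potential obstacle, and it is a routine consequence of the Borel operational calculus outlined in Section~\ref{prelim}, since the affine function $\mu\mapsto\mu-\lambda$ is Borel measurable and $(A-\lambda I)=F(A)$ for $F(\mu)=\mu-\lambda$ is by construction a scalar type spectral operator. Once this is granted, the argument is purely formal: the decomposition leaves no room for the defining configuration of the residual spectrum, namely a trivial kernel paired with a non-dense range. I would close by remarking that this mirrors the classical normal-operator fact, now seen to hold for the strictly broader class of scalar type spectral operators in an arbitrary complex Banach space.
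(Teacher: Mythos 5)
Your proposal is correct and coincides with the paper's own proof: both apply Theorem~\ref{M2006} to the shifted operator $A-\lambda I$ and observe that a trivial kernel forces $\overline{R(A-\lambda I)}=X$ in decomposition \eqref{DS}, which is incompatible with $\lambda\in\sigma_r(A)$. Your extra care in verifying via the Borel operational calculus that $A-\lambda I$ is again scalar type spectral is a welcome detail the paper leaves implicit, but it does not change the argument.
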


\begin{proof}\quad
Whenever, for $\lambda\in\C$, the scalar type spectral operator $A-\lambda I$ is \textit{one-to-one}, $\ker(A-\lambda I)=\{0\}$, and hence, by \eqref{DS},
$\overline{R(A-\lambda I)}=X$, which implies that $\sigma_r(A)=\emptyset$.
\end{proof}

\begin{exmp}\label{exmp1}
In $l_2$, the unbounded linear operator 
\[
A(x_1,x_2,\dots)=(0,x_1,2x_2,\dots,nx_{n+1},\dots)
\] 
with the domain $D(A)=\left\{(x_1,x_2,\dots)\in l_2\,\middle|\, (0,x_1,2x_2,\dots,nx_{n+1},\dots)\in l_2\right\}$
is den\-sely defined and closed, but, by Corollary \ref{ERS}, is \textit{not spectral of scalar type} since
$0\in \sigma_r(A)$.
\end{exmp}

In respect that $\sigma_r(A)=\emptyset$, the proof of {\cite[Theorem 4.4]{Foguel1958}} can be used verbatim to prove the following

\begin{prop}[Countability of Point Spectrum]\label{CPS}\ \\
For a scalar type spectral operator $A$ in a complex separable Banach space $(X,\|\cdot\|)$, $\sigma_p(A)$ is a countable set.
\end{prop}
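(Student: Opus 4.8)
The plan is to reduce the countability of $\sigma_p(A)$ to a counting of the atoms of the spectral measure $E_A(\cdot)$ and then let separability do the work. First I would record the characterization of eigenvalues in terms of atoms. For each $\lambda\in\C$ the operator $A-\lambda I$ is again scalar type, with spectral measure $\delta\mapsto E_A(\delta+\lambda)$, so applying Theorem \ref{M2006} to $A-\lambda I$ gives $\ker(A-\lambda I)=E_A(\{\lambda\})X$. Hence $\lambda\in\sigma_p(A)$ exactly when $E_A(\{\lambda\})\neq 0$, i.e., $\sigma_p(A)$ is precisely the set of atoms of $E_A(\cdot)$, and it suffices to prove this set is countable. (The emptiness of the residual spectrum, Corollary \ref{ERS}, is what lets Foguel's original argument transfer verbatim, since then every spectral point is either point or continuous and the count of atoms genuinely exhausts $\sigma_p(A)$.)

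The key step is a quasi-orthogonality estimate that replaces the Hilbert-space orthogonality of eigenspaces used in the normal case. For distinct $\lambda,\mu\in\sigma_p(A)$, multiplicativity of the spectral measure yields $E_A(\{\lambda\})E_A(\{\mu\})=E_A(\{\lambda\}\cap\{\mu\})=E_A(\emptyset)=0$. Choosing for each $\lambda\in\sigma_p(A)$ a vector $f_\lambda\in E_A(\{\lambda\})X$ with $\|f_\lambda\|=1$, I would note that $E_A(\{\lambda\})f_\lambda=f_\lambda$ while $E_A(\{\lambda\})f_\mu=0$ whenever $\mu\neq\lambda$. Therefore, with $M>0$ a uniform bound for $E_A(\cdot)$,
\[
1=\|f_\lambda\|=\bigl\|E_A(\{\lambda\})(f_\lambda-f_\mu)\bigr\|\le M\,\|f_\lambda-f_\mu\|,
\]
so that $\|f_\lambda-f_\mu\|\ge 1/M$ for all distinct $\lambda,\mu\in\sigma_p(A)$, exhibiting $\{f_\lambda\}$ as a uniformly separated family.

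Separability then closes the argument: the open balls $B(f_\lambda,1/(2M))$, $\lambda\in\sigma_p(A)$, are pairwise disjoint by the triangle inequality, and in a separable metric space any disjoint family of nonempty open sets is countable, since each such set must contain a point of a fixed countable dense subset. Thus $\sigma_p(A)$ injects into that dense set and is countable. The main obstacle, and the whole point of the proof, is the passage from the Hilbert to the Banach setting: one can no longer invoke orthogonality of eigenvectors, and the argument succeeds only because the boundedness of the spectral measure converts the disjointness $E_A(\{\lambda\})E_A(\{\mu\})=0$ into the quantitative separation $1/M$. Everything else is bookkeeping.
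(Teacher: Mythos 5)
Your proposal is correct and takes essentially the same route as the paper: the paper's proof is a one-line appeal to Foguel's bounded-case argument \cite[Theorem 4.4]{Foguel1958} transferring verbatim once $\sigma_r(A)=\emptyset$ is in hand, and that argument is exactly what you reconstruct --- $\sigma_p(A)$ is the set of atoms of $E_A(\cdot)$ (via Theorem \ref{M2006} applied to $A-\lambda I$), boundedness of the spectral measure turns disjointness of atoms into the uniform separation $\|f_\lambda-f_\mu\|\ge 1/M$, and separability then forces countability. One cosmetic remark: in your self-contained version the parenthetical appeal to $\sigma_r(A)=\emptyset$ (Corollary \ref{ERS}) is not actually load-bearing, since the identification $\ker(A-\lambda I)=E_A(\{\lambda\})X$ obtained from Theorem \ref{M2006} already yields the atom characterization of $\sigma_p(A)$ directly.
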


\begin{exmp}\label{exmp2}
In the separable Banach space $C([a,b],\C)$ ($-\infty<a<b<\infty$) with 
the \textit{maximum norm}, the 
differentiation operator
\[
C^1[a,b]\ni x\mapsto [Ax](t)=x'(t),
\ a\le t\le b,
\]
is densely defined, linear, and closed, but, by Proposition \ref{CPS}, \textit{not spectral of scalar type} since 
$\sigma_p(A)=\C$.
\end{exmp}

\smallskip
Now, let us stretch Theorem \ref{SGB} to the unbounded case.

\begin{thm}[Characterization of Spectral Gap at 0]\label{CSG}\ \\
For a scalar type spectral operator $A$ in a complex Banach space $(X,\|\cdot\|)$ with spectral measure $E_A(\cdot)$ and $0\in \sigma(A)$, $0$ is an isolated point of the spectrum $\sigma(A)$ iff the range $R(A)$ of $A$ is closed, i.e., $\overline{R(A)}=R(A)$.
\end{thm}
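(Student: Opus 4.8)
The plan is to reduce everything to the action of $A$ on the spectral subspace $Y\df\overline{R(A)}$ and then invoke Theorem \ref{M2006}. By that theorem $X=\ker A\oplus Y$ with $\ker A=E_A(\{0\})X$ and $Y=E_A(\sigma(A)\setminus\{0\})X$, so $Y$ is a closed subspace reducing $A$. I would introduce $A_Y\df\restr{A}{Y}$ with domain $D(A)\cap Y$, the part of $A$ in $Y$; it is a closed, densely defined scalar type spectral operator on $Y$ whose spectral measure is the restriction of $E_A(\cdot)$ to the Borel subsets of $\sigma(A)\setminus\{0\}$, so that $\sigma(A_Y)=\overline{\sigma(A)\setminus\{0\}}$. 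Since $\ker A\cap Y=\{0\}$, the operator $A_Y$ is injective; and since $A$ annihilates $\ker A$ while each $E_A(\delta)$ preserves $D(A)$, splitting $f\in D(A)$ as $E_A(\{0\})f+E_A(\sigma(A)\setminus\{0\})f$ shows $R(A)=R(A_Y)$. Because $\overline{R(A_Y)}=Y$, the range $R(A)$ is closed if and only if $A_Y$ maps $D(A_Y)$ onto $Y$. Finally, as $0\in\sigma(A)$, the point $0$ is isolated in $\sigma(A)$ precisely when $0\notin\overline{\sigma(A)\setminus\{0\}}=\sigma(A_Y)$, i.e.\ precisely when $0\in\rho(A_Y)$. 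Thus the theorem reduces to the assertion that $A_Y$ is onto $Y$ if and only if $0\in\rho(A_Y)$.

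For the implication ``$R(A)$ closed $\Rightarrow 0$ isolated'' I would argue as follows. If $A_Y$ maps onto $Y$, then, being a closed injective operator from $D(A_Y)$ onto $Y$, its inverse $A_Y^{-1}$ is closed and everywhere defined on the Banach space $Y$, hence bounded by the closed graph theorem. Therefore $0\in\rho(A_Y)$, and by the reduction above $0$ is isolated in $\sigma(A)$.

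For the converse ``$0$ isolated $\Rightarrow R(A)$ closed'' I would manufacture the inverse explicitly through the operational calculus rather than appeal to abstract invertibility. If $0$ is isolated then $\sigma(A)\setminus\{0\}$ is bounded away from $0$, so $F(\lambda)\df\lambda^{-1}\chi_{\sigma(A)\setminus\{0\}}(\lambda)$ is a bounded Borel function and $B\df\int_{\C}F(\lambda)\,dE_A(\lambda)$ is a bounded operator on $X$. By the multiplicativity of the Borel operational calculus, $AB=E_A(\sigma(A)\setminus\{0\})$ holds as a bounded, everywhere-defined operator; in particular $By\in D(A)$ and $A(By)=E_A(\sigma(A)\setminus\{0\})y=y$ for every $y\in Y$. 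Hence $Y\subseteq R(A)$, and combined with $R(A)\subseteq\overline{R(A)}=Y$ this yields $R(A)=Y$, a closed set.

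The main obstacle I anticipate is the spectral bookkeeping underlying the reduction, namely the identity $\sigma(A_Y)=\overline{\sigma(A)\setminus\{0\}}$, which rests on the fact that for a scalar type spectral operator the spectrum coincides with the support of its spectral measure and that passing to the spectral subspace $Y$ restricts that measure; together with the closed graph step that manufactures a \emph{bounded} inverse in the possibly unbounded setting. I note that when $\rho(A)\neq\emptyset$ one could instead derive the result from the bounded case: for $\mu\in\rho(A)$ the operator $T\df(A-\mu I)^{-1}$ is a bounded scalar type spectral operator, and Theorem \ref{SGB} applied to the bounded scalar type spectral operator $T+\mu^{-1}I=\mu^{-1}A(A-\mu I)^{-1}$ --- which has $0$ in its spectrum, has range equal to $R(A)$, and has $0$ isolated in its spectrum exactly when $0$ is isolated in $\sigma(A)$ by the spectral mapping theorem for the resolvent --- gives the claim. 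Since, however, an unbounded scalar type spectral operator may satisfy $\sigma(A)=\C$, whence $\rho(A)=\emptyset$, I prefer the self-contained argument above.
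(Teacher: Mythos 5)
Your proposal is correct and takes essentially the same route as the paper: the ``only if'' direction via the bounded Borel function equal to $\lambda^{-1}$ away from $0$ and the operational-calculus identity $AF(A)=E_A(\sigma(A)\setminus\{0\})$, and the ``if'' direction via the injective restriction of $A$ to the (closed) range together with the Closed Graph Theorem. The only cosmetic difference is that you package both directions through the part $A_Y$ on $Y=\overline{R(A)}$ and the support identity $\sigma(A_Y)=\overline{\sigma(A)\setminus\{0\}}$, whereas the paper writes $A$ as a matrix operator in $\ker A\oplus R(A)$ (after noting $0\in\sigma_p(A)$) and invokes the openness of the resolvent set.
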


\begin{proof}\

{\it ``Only if"} part.\quad 
Suppose that $0$ is an \textit{isolated point} of $\sigma(A)$.

Considering that
\[
\sigma(A)\setminus \{0\}=\sigma(A)\setminus \left\{\lambda\in\C\,\middle|\, |\lambda|<\gamma\right\}.
\]
with some $\gamma>0$, to the \textit{bounded} Borel measurable function 
\begin{equation*}
F(\lambda):=  
\begin{cases}
0&\text{for $\lambda\in \C$ with $|\lambda|< \gamma$}\\
\dfrac{1}{\lambda}&\text{for $\lambda\in \C$ with $|\lambda|\ge \gamma$},
\end{cases}
\end{equation*}
by the properties of the {\it operational calculus} ({\cite[Theorem XVIII.2.11]{Dun-SchIII}}), there corresponds a \textit{bounded} scalar type spectral operator
\[
F(A)=\int\limits_{\C}F(\lambda)\,dE_A(\lambda)
\]
and, for each $f\in X$,
\begin{multline*}
E_A(\sigma(A)\setminus \{0\})f
=E_A\left(\sigma(A)\setminus \left\{\lambda\in\C\,\middle|\, |\lambda|<\gamma\right\}\right)f
\\
\shoveleft{
=\int\limits_{\left\{\lambda\in\C\,\middle|\, |\lambda|\ge \gamma\right\}} 1\,dE_A(\lambda)f
=\int\limits_{\C} \lambda F(\lambda)\,dE_A(\lambda)f
=AF(A)f\in R(A).
}\\
\end{multline*}

Since, by Theorem \ref{M2006}, $E_A(\sigma(A)\setminus \{0\})$ is the projection onto $\overline{R(A)}$ along $\ker A$ \cite{Markin2006}, we infer
that $\overline{R(A)}=R(A)$. 

\smallskip
{\it ``If"} part.\quad
Suppose that $\overline{R(A)}=R(A)$, which, considering $\sigma_r(A)=\emptyset$, implies that
$0\in \sigma_p(A)$, i.e., $\ker A\neq \{0\}$. 

Then, by Theorem \ref{M2006}, the direct sum decomposition
\begin{equation}\label{DSS}
X=\ker A\oplus R(A),
\end{equation}
where $\ker A=E_A(\{0\})X$ and $R(A)=E_A(\sigma(A)\setminus\{0\})X$, holds, and hence, $A$ can be treated as the matrix operator 
\[
\begin{bmatrix}
0&0\\
0&A_1\\
\end{bmatrix},
\]
in $\ker A\oplus R(A)$, where $A_1:D(A)\cap R(A)\to R(A)$ is the restriction of $A$ to $R(A)$. Such a consideration makes apparent the fact that 
\[
\sigma(A)=\{0\}\cup \sigma(A_1).
\]
Since $\ker A\cap R(A)=\{0\}$, the \textit{closed linear operator} $A_1:D(A)\cap R(A)\to R(A)$ is \textit{bijective} and has an \textit{inverse} defined on $R(A)$, which, in respect that
$(R(A),\|\cdot\|)$ is a Banach space, by the \textit{Closed Graph Theorem} (see, e.g., \cite{Dun-SchI}), is \textit{bounded}. 

Hence, $0$ is a \textit{regular point} of $A_1$. Considering the fact that the \textit{resolvent set} of a closed operator is \textit{open} in $\C$ (see, e.g., \cite{Dun-SchI}), we infer that, there is a neighborhood of $0$ not containing points of $\sigma(A_1)$, i.e., other points of $\sigma(A)$, which makes $0$ to be an \textit{isolated point} of $\sigma(A)$.
\end{proof}

\begin{rem}
Observe that, the fact that $\lambda_0$
is an \textit{isolated point} of the spectrum $\sigma(A)$ of a scalar type 
spectral operator $A$, necessarily implies that $\lambda_0\in \sigma_p(A)$. Indeed, the spectrum being the {\it support} of the operator's spectral measure $E_A(\cdot)$, we  immediately infer that
\begin{equation*}
E_A(\{\lambda_0\})\neq 0,
\end{equation*}
which makes $\lambda_0$ to be an \textit{eigenvalue} of $A$ with the \textit{eigenspace} $E_A(\{\lambda_0\})X$ \cite{Dunford1958,Dun-SchIII}. The converse, however, is not true.
\end{rem}

\begin{exmp}
In $l_2$, for the \textit{self-adjoint} operator 
\[
l_2\ni (x_1,x_2,\dots)\mapsto A(x_1,x_2,\dots)=(0,x_2,x_3/2,x_4/3,\dots)\in l_2,
\] 
the eigenvalue $0$ is not an isolated point of $\sigma(A)
=\sigma_p(A)=\left\{0,1,1/2,1/3,\dots\right\}$.
\end{exmp}

\begin{cor}\label{RI}
If, for a scalar type spectral operator $A$ in a complex Banach space $(X,\|\cdot\|)$, $0$ is a regular point or an isolated point of the spectrum $\sigma(A)$, direct sum decomposition \eqref{DSS} holds and the operator $A+E_A(\{0\})$ has a bounded inverse defined on $X$, i.e., $0\in \rho\left(A+E_A(\{0\})\right)$ ($\rho(\cdot)$ is the \textit{resolvent set} of an operator).
\end{cor}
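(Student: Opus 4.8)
\emph{Proof proposal.}

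The plan is to split along the two alternatives in the hypothesis and reduce each, in turn, to the analysis already carried out for Theorem~\ref{CSG}. I would begin with the \emph{regular-point} case: if $0\in\rho(A)$, then $0\notin\sigma(A)$, and since $\sigma(A)$ is the support of $E_A(\cdot)$ we obtain $E_A(\{0\})=0$ together with $E_A(\sigma(A)\setminus\{0\})=E_A(\sigma(A))=I$. Thus $\ker A=E_A(\{0\})X=\{0\}$ while $R(A)=X$ (as $A$ is boundedly invertible, hence onto), so \eqref{DSS} degenerates to $X=\{0\}\oplus X$; moreover $A+E_A(\{0\})=A$ already has a bounded inverse defined on $X$, giving $0\in\rho(A+E_A(\{0\}))$.

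For the \emph{isolated-point} case I would first invoke Theorem~\ref{CSG} to conclude that $R(A)$ is closed, whence Theorem~\ref{M2006} delivers \eqref{DSS} outright, with $\ker A=E_A(\{0\})X$ and $R(A)=E_A(\sigma(A)\setminus\{0\})X$. Relative to this decomposition, $E_A(\{0\})$ is the projection onto $\ker A$ along $R(A)$, so it equals the identity on $\ker A$ and vanishes on $R(A)$; hence, with $A_1:=\restr{A}{R(A)}$ as in the proof of Theorem~\ref{CSG}, the operator $A+E_A(\{0\})$ takes the matrix form
\[
\begin{bmatrix} I & 0\\ 0 & A_1 \end{bmatrix}
\]
in $\ker A\oplus R(A)$. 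Since $I$ on $\ker A$ is trivially boundedly invertible and $A_1$ was shown (via the Closed Graph Theorem) to be a bijection of $R(A)$ with bounded inverse, $A+E_A(\{0\})$ is a bijection of $X$ with bounded inverse, i.e. $0\in\rho(A+E_A(\{0\}))$.

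A cleaner, computation-free alternative for the isolated case is to argue directly through the Borel operational calculus. Writing $G(\lambda):=\lambda+\chi_{\{0\}}(\lambda)$, one has $A+E_A(\{0\})=G(A)$, and since $0$ is isolated there is $\gamma>0$ with $\sigma(A)\cap\{\,|\lambda|<\gamma\,\}=\{0\}$, so on the support $\sigma(A)$ we have $G(0)=1$ and $|G(\lambda)|=|\lambda|\ge\gamma$ otherwise, i.e. $|G|\ge\min(1,\gamma)>0$. Then $H:=1/G$ is a bounded Borel function on $\sigma(A)$, the operator $H(A)$ is bounded, and $H(A)$ serves as the two-sided inverse of $G(A)$. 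The step to state with care — and the only real obstacle — is precisely this last assertion: the product/homomorphism rule $H(A)G(A)=G(A)H(A)=(HG)(A)=I$ must be applied to the \emph{unbounded} symbol $G$ against the bounded $H$, which is legitimate by the standard properties of the operational calculus ({\cite[Theorem XVIII.2.11]{Dun-SchIII}}) but deserves an explicit appeal. The matrix argument above sidesteps this subtlety entirely, which is why I would present it as the primary route and offer the operational-calculus computation only as a remark.
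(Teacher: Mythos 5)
Your proof is correct, but it is genuinely more self-contained than the paper's. The paper's own proof is two lines: it gets decomposition \eqref{DSS} from Theorem~\ref{CSG}, notes via Theorem~\ref{M2006} that $E_A(\{0\})$ is the projection onto $\ker A$ along $R(A)$, and then delegates the invertibility of $A+E_A(\{0\})$ entirely to the literature, citing a general fact (\cite{Korolyuk-Turbin,Kato,Daletsky-Krein}) that for any closed linear operator $A$ satisfying $X=\ker A\oplus R(A)$, the operator $A+P$, with $P$ the projection onto $\ker A$ along $R(A)$, has a bounded inverse defined on all of $X$ (such operators being called \emph{reducibly invertible}). Your block-diagonal argument --- identity on $\ker A$, $A_1$ on $R(A)$ --- is in effect a proof of that cited general statement specialized to $P=E_A(\{0\})$, and your reuse of the bijectivity of $A_1$ with bounded inverse from the ``if'' part of Theorem~\ref{CSG} is legitimate, since the closed range hypothesized there is supplied here by the ``only if'' part; the domain splitting $D(A)=\ker A\oplus\bigl(D(A)\cap R(A)\bigr)$ needed for the matrix form follows from $E_A(\delta)D(A)\subseteq D(A)$. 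Your operational-calculus alternative with $G(\lambda)=\lambda+\chi_{\{0\}}(\lambda)$ is also sound --- $|G|\ge\min(1,\gamma)>0$ on $\sigma(A)$, $H:=1/G$ is bounded there, and the product rule for a bounded symbol against an unbounded one is indeed covered by {\cite[Theorem XVIII.2.11]{Dun-SchIII}}, which you rightly flag as the one step requiring explicit appeal --- though, unlike the matrix route (and unlike the general result the paper cites), it exploits the scalar-type structure and would not generalize. A small point in your favor: you treat the regular-point case explicitly ($E_A(\{0\})=0$, the decomposition degenerating to $X=\{0\}\oplus X$, and $A+E_A(\{0\})=A$), whereas the paper asserts that \eqref{DSS} ``follows immediately from Theorem~\ref{CSG}'' even though that theorem is stated under the hypothesis $0\in\sigma(A)$; your explicit case split closes that cosmetic gap.
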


\begin{proof}\quad 
The validity of decomposition \eqref{DSS}
follows immediately from Theorem \ref{CSG}.

Since, by Theorem \ref{M2006}, the projection $E_A(\{0\})$ is onto $\ker A$ along $R(A)$, the rest follows from a more general statement concerning the existence of a bounded inverse defined on $X$ of $A+P$ with a closed linear operator $A$, for which decomposition \eqref{DSS} holds, and $P$ is the projection onto $\ker A$ along $R(A)$ (\cite{Korolyuk-Turbin,Kato,Daletsky-Krein}, cf. also \cite{Markin1991,Markin1994Preprint,MarkinMEWS}). Such operators  are naturally called \textit{reducibly invertible}. 
\end{proof}

Thus, a scalar type spectral operator $A$, for which $0$ is a regular point or an isolated point of spectrum, is \textit{reducibly invertible}.

%%%%%%%%%%%%%%%%%%%%%%%%%%%%%%%%%%%%%%%%%%%%%%%%
\section{Final Remarks}

As Examples \ref{exmp1} and \ref{exmp2} demonstrate, Corollary \ref{ERS} and Proposition \ref{CPS} are ready tests for disqualifying an operator from being scalar type spectral.

Theorem \ref{CSG} relates a peculiar topological property of the spectrum of a scalar type spectral operator to a rather natural topological property of its range.

Observe also that decompositions \eqref{DS} and \eqref{DSS} are essential in the context of the asymptotic behavior of \textit{weak/mild solutions} of the associated abstract evolution equation
\begin{equation*}
y'(t)=Ay(t),\ t\ge 0,
\end{equation*}
\cite{Ball,Engel-Nagel,Hille-Phillips,Markin1991,Markin1994Preprint,MarkinMEWS}.

%%%%%%%%%%%%%%%%%%%%%%%%%%%%%%%%%%%%%%%%%%%%%%%%
\section{Acknowledgments}

The author extends cordial thanks to his colleague, Dr.~Michael Bishop of the Department of Mathematics, California State University, Fresno, for stimulating discussions which inspired writing this note.
%%%%%%%%%%%%%%%%Bibliography%%%%%%%%%%%%%%%%%%%%%%%%%%%%%%%%%

%%%%%%%%%%%%%%%%%%%%%%%%%
\end{document}